\numberwithin{equation}{section}
\numberwithin{figure}{section}
\theoremstyle{plain}
\newtheorem{thm}{\protect\theoremname}[section]
\theoremstyle{plain}
\theoremstyle{definition}
\theoremstyle{plain}
\theoremstyle{plain}
\theoremstyle{plain}
\providecommand{\definitionname}{Definition}
\providecommand{\lemmaname}{Lemma}
\providecommand{\theoremname}{Theorem}
\providecommand{\corollaryname}{Corollary}
\providecommand{\remarkname}{Remark}
\providecommand{\propositionname}{Proposition}
\DeclareMathOperator{\loc}{loc}
\DeclareMathOperator{\supp}{supp}
\DeclareMathOperator{\cp}{cap}
\DeclareMathOperator{\BMO}{BMO}
\DeclareMathOperator{\Lip}{Lip}
\begin{document}



\title[Composition operators and $\BMO$-quasiconformal mappings]{Composition operators on Hardy-Sobolev spaces and $BMO$-quasiconformal mappings}

\author{Alexander Menovschikov and Alexander Ukhlov}

\begin{abstract}
In this paper we consider composition operators on Hardy-Sobolev spaces in connections with $BMO$-quasiconformal mappings. Using the duality of Hardy spaces and $\BMO$-spaces we prove that $BMO$-quasiconformal mappings generate bounded composition operators from Hardy-Sobolev spaces to Sobolev spaces.

\end{abstract}

\maketitle
\footnotetext{\textbf{Key words and phrases:} Sobolev spaces, Quasiconformal mappings} 
\footnotetext{\textbf{2000 Mathematics Subject Classification:} 46E35, 30C65.}

\section{Introduction }

Composition operators on Sobolev spaces arise in the work by V.~Maz'ya \cite{M69} in connection with the isoperimetric problem as operators generated by sub-areal mappings. In this pioneering work it was established a connection between geometrical properties of mappings and the corresponding Sobolev spaces.
In the present paper we consider composition operators on Hardy-Sobolev spaces generated by $BMO$-quasiconformal mappings. 
Let Hardy-Sobolev spaces $H^{1,n}_r(\Omega)$ are defined in Lipschitz bounded domains in $\Omega\subset \mathbb R^n$, 
Sobolev spaces $L^{1,n}(\widetilde{\Omega})$ are defined in bounded domains in $\widetilde{\Omega}\subset \mathbb R^n$ and $\varphi:\Omega\to\widetilde{\Omega}$ is a $BMO$-quasiconformal mapping. Then the inequality
$$
\|f\circ \varphi^{-1}\mid L^{1,n}(\widetilde{\Omega})\|\leq 
\|Q \mid {\BMO_z({\Omega})}\|^{\frac{1}{n}} \|f \mid {H^{1,n}_r({\Omega})}\|,
$$
where a measurable function $Q:\Omega\to\mathbb R$ be such that a quasiconformal distortion $K(\varphi)\leq Q$ a.~e. in $\Omega$ \cite{MRSY09}, holds for any Lipschitz function $f\in \Lip ({\Omega})$.

$\BMO$-quasiconformal mappings generalize the notion of quasiconformal mappings, because $K$-quasiconformal mappings are $\BMO$-quasiconformal mappings with $Q:=K\in\BMO(\Omega)$.
Composition operators on Sobolev spaces in connections with quasiconformal mappings were considered in \cite{VG75} in the frameworks of Reshetnyak's problem (1968). Note that this problem arises to quasiconformal mappings and Royden algebras \cite{L71,N60}.
In \cite {VG75} it was proved that a homeomorphism $\varphi:\Omega\to\widetilde{\Omega}$, where
$\Omega$, $\widetilde{\Omega}$ are domains in $\mathbb R^n$, generates by the composition rule $\varphi^{\ast}(f)=f\circ\varphi$ the bounded operator on Sobolev spaces
$$
\varphi^{\ast}: L^{1,n}(\widetilde{\Omega}) \to L^{1,n}(\Omega),
$$
if and only if $\varphi$ is a quasiconformal mapping. In the case of Sobolev spaces $L^{1,p}(\widetilde{\Omega})$ and $L^{1,p}(\Omega)$, $p\ne n$, the analytic description was obtained in \cite{V88} using a notion of mappings of finite distortion introduced in \cite{VGR}: a weakly differentiable mapping is called a mapping of finite distortion if $|D\varphi(x)|=0$ a.~e. on the set $Z=\{x\in\Omega: J(x,\varphi)=0\}$. 
In \cite{GGR95} characterizations of composition operators in geometric terms for $n-1<p<\infty$ were obtained. 

The case of Sobolev spaces $L^{1,p}(\widetilde{\Omega})$ and $L^{1,p}(\Omega)$, $q<p$, is more complicated and in this case the composition operators theory is based on the countable-additive set functions, related with norms of composition operators and introduced in \cite{U93} (see also \cite{VU02}).
The main result of \cite{U93} gives analytic and capacitary characterizations of composition operators on Sobolev spaces (see, also \cite{VU02}) in terms of mappings of finite distortion \cite{HK14,VGR}. Multipliers theory has been applied to the change of variable problem in Sobolev spaces in \cite{MS86}.

In the last decade the composition operators theory has been considered on some generalizations of Sobolev spaces, such as Besov spaces and Triebel-Lizorkin spaces, \cite{HK13,KKSS14,KYZ11,KXZZ17,OP17}. These types of composition operators have applications to the Calder\'on inverse conductivity problem \cite{C80}. Composition operators on Sobolev spaces over Banach function spaces (such as Orlicz, Lorentz, variable exponents etc.) have been considered in \cite{ADV09, HK12, HKM14, K14, R17, MA16, MA17}.

Remark that composition operators on Sobolev spaces have significant applications to the Sobolev embedding theory \cite{GGu,GU} and to the spectral theory of elliptic operators, see, for example, \cite{GU16,GU17,GPU18_3}. In some cases the composition operators method allows one to obtain better estimates than the classical L.~E.~Payne and H.~F.~Weinberger estimates in convex domains \cite{PW}.

The notion of $Q$-mappings was introduced in \cite{MRSY01} (see also \cite{MRSY04}--\cite{MRSY09}).
Recall that a homeomorphism $\varphi: \Omega\to\Omega'$ of domains
$\Omega,\Omega'\subset \mathbb{R}^n$ is called a $Q$-homeomorphism with a non-negative measurable function $Q$, if
$$
M\left(\varphi \Gamma\right)\leqslant \int\limits_{\Omega} Q(x)\cdot
\rho^{\nu}(x)dx
$$
for every family $\Gamma$ of rectifiable paths in $\Omega$ and every admissible function $\rho$ for $\Gamma$.

The $Q$-mappings with a function $Q$ belongs to the $A_n$-Muckenhoupt class are inverse to homeomorphisms generating bounded composition operators on the weighted Sobolev spaces \cite{UV08} (see, also \cite{V20}). In the case $Q\in \BMO(\Omega)$ we have a class of $BMO$-quasiconformal mappings \cite{MRSY09,RSY01}. Note that $\BMO$-quasiconformal mappings have significant applications in the Beltrami equation theory \cite{GRSY12}.

The aim of the present article is to study $Q$-mappings with $Q\in\BMO$ in connection with composition operators on Sobolev-type spaces. This leads us to consider composition operators on Hardy-Sobolev spaces.

The theory of Hardy spaces on the Euclidean space $\mathbb{R}^n$, arise in the work by E.~M.~Stein and G.~Weiss in \cite{SW60}. Later, C.~Fefferman and E.~M.~Stein \cite{FS72} systematically developed the real-variable theory for Hardy spaces $H^p(\mathbb R^n)$ with $p\in(0, 1]$, which plays an important role in various fields of analysis (see, for example, \cite{St93}). Hardy spaces and $\BMO$-spaces on domains of $\mathbb{R}^n$ were considered in \cite{C94,CKS93}. The current state of the art and references to applications of Hardy spaces on domains of $\mathbb{R}^n$ the reader will find in \cite{CJY16}. Composition operators on Hardy and Hardy-Sobolev spaces of analytic functions have been intensively studied for a long time and can be found, for example in \cite{CM95, HCH15}.

\section{Hardy-Sobolev spaces}

\subsection{Sobolev spaces}

Let $E$ be a measurable subset of $\mathbb R^n$, $n\geq 2$. The Lebesgue space $L^p(E)$, $1\leq p<\infty$, is defined as a Banach space of $p$-summable functions $f:E\to \mathbb R$ equipped with the following norm:
$$
\|f\mid L^p(E)\|=
\biggr(\int\limits_E|f(x)|^p\,dx\biggr)^{\frac{1}{p}},\,\,\,1\leq p<\infty.
$$

If $\Omega$ is an open subset of $\mathbb R^n$, the Sobolev space $W^{1,p}(\Omega)$, $1\leq p<\infty$, is defined \cite{M}
as a Banach space of locally integrable weakly differentiable functions
$f:\Omega\to\mathbb{R}$ equipped with the following norm: 
\[
\|f\mid W^{1,p}(\Omega)\|=\| f\mid L^p(\Omega)\|+\|\nabla f\mid L^p(\Omega)\|,
\]
where $\nabla f$ is the weak gradient of the function $f$, i.~e. $ \nabla f = (\frac{\partial f}{\partial x_1},...,\frac{\partial f}{\partial x_n})$.

The homogeneous seminormed Sobolev space $L^{1,p}(\Omega)$, $1\leq p<\infty$, is defined as a space
of locally integrable weakly differentiable functions $f:\Omega\to\mathbb{R}$ equipped
with the following seminorm: 
\[
\|f\mid L^{1,p}(\Omega)\|=\|\nabla f\mid L^p(\Omega)\|.
\]

\subsection{Hardy and Hardy-Sobolev spaces}

Let us recall the classical definition of Hardy spaces $H^1(\mathbb{R}^n)$. 
Let $\Phi \in \mathcal{S}(\mathbb{R}^n)$ be a function such that $\int_{\mathbb{R}^n} \Phi(x) \, dx = 1$. For all $t>0$, define $\Phi_t(x)= t^{-n}\Phi(x/t)$ and the vertical maximal function
$$
\mathcal{M}f(x)= \sup\limits_{t>0} |\Phi_t * f(x)|.
$$

Let a function $f \in L^1_{\loc}(\mathbb{R}^n)$, then $f$ is said to be in $H^1(\mathbb{R}^n)$ if $\mathcal{M}f \in L^1(\mathbb{R}^n)$. The Hardy space $H^1(\mathbb{R}^n)$ is equipped with the norm
$$
\|f \mid H^1(\mathbb{R}^n) \| := \|\mathcal{M}f \mid L^1(\mathbb{R}^n)\|.
$$

There are several definitions of Hardy spaces \cite{C94,CKS93,M90} and Hardy-Sobolev spaces on domains $\Omega \subset \mathbb{R}^n$ (see, e.g. \cite{ART05, CJY16}). Following \cite{ART05} we define two type of Hardy spaces on Lipschitz domains in $\mathbb R^n$.
The Hardy space $H^1_z(\Omega)$ is defined as a space of functions 
$f \in H^1(\mathbb R^n)$, such that $\supp f \subset \overline{\Omega}$. Endowed with the norm 
$$
\|f\mid H^1_z(\Omega)\|:=\|f \mid H^1(\mathbb R^n)\|,
$$
it is a Banach space. 

The Hardy space $H^1_r(\Omega)$ is defined as a space of functions 
$f$ which are restrictions to $\Omega$ of functions $F\in H^1(\mathbb R^n)$. If $f\in H^1_r(\Omega)$ then 
$$
\|f\mid H^1_r(\Omega)\|:=\inf \|F\mid H^1(\mathbb R^n)\|,
$$
where the infimum is taken over all functions $F\in H^1(\mathbb R^n)$ such that $F\vert_{\Omega}=f$.
The space $H^1_r(\Omega)$ equipped with this norm is a Banach space.
In \cite{M90}, it was shown that $H^1_r(\Omega)$ can be define in terms of maximal function: $\|f \mid H^1_r(\Omega)\| = \| \mathcal{M}_\Omega f \mid L_1(\Omega)\|$,
$$
\mathcal{M}_\Omega f(x)= \sup\limits_{t \leq d(x, \partial \Omega)} |\Phi_t * f(x)|.
$$

We define the Hardy-Sobolev space $HS^{1,p}_r(\Omega)$ ($HS^{1,p}_z$), $1\leq p<\infty$, as a space of weakly differentiable functions $f \in L^p(\Omega)$ such that $|\nabla f|^p \in H^1_r(\Omega)$ ($|\nabla f|^p \in H^1_z(\Omega)$) and equipped with the norms
$$
\|f \mid HS^{1,p}_r(\Omega)\| := \|f \mid L^p(\Omega)\| + \||\nabla f|^p \mid H^1_r(\Omega)\|^{\frac{1}{p}},
$$ 
$$
\|f \mid HS^{1,p}_z(\Omega)\| := \|f \mid L^p(\Omega)\| + \||\nabla f|^p \mid H^1_z(\Omega)\|^{\frac{1}{p}}.
$$

The homogeneous Hardy-Sobolev space $H^{1,p}_r(\Omega)$ ($H^{1,p}_z(\Omega)$), $1\leq p<\infty$, we define as a space of locally integrable weakly differentiable functions $f:\Omega\to\mathbb{R}$ equipped with the following seminorms: 
\[
\|f\mid H^{1,p}_r\Omega)\|:=\||\nabla f|^p \mid H^1_r(\Omega)\|^{\frac{1}{p}} \,\, \text{and }\,\,\|f\mid H^{1,p}_z(\Omega)\|:=\||\nabla f|^p \mid H^1_z(\Omega)\|^{\frac{1}{p}}.
\]

Let us prove that a function 
$$
\|\cdot\|_p: f\mapsto  \||\nabla f|^p \mid H^1_r(\Omega)\|^{\frac{1}{p}}
$$
is a seminorm (for the case of $H^1_z(\Omega)$ the proof is similar).

\noindent
1. {\bf Nonnegativity}: 
	$$
	\|f\mid H^{1,p}_r(\Omega)\|:=\||\nabla f|^p \mid H^1_r(\Omega)\|^{\frac{1}{p}}\geq 0\,\,\text{ for all}\,\, f\in H^{1,p}_r(\Omega).
	$$
	
\noindent	
2. {\bf Absolute homogeneity}: 
	\begin{multline*}
	\|kf\mid H^{1,p}_r(\Omega)\|:=\||k \nabla f|^p \mid H^1_r(\Omega)\|^{\frac{1}{p}}=\||k|\cdot |\nabla f|^p \mid H^1_r(\Omega)\|^{\frac{1}{p}}\\
	=
	|k|\||\nabla f|^p \mid H^1_r(\Omega)\|^{\frac{1}{p}}
	=|k|\|f\mid H^{1,p}_r(\Omega)\|
	\end{multline*}
	for any $k\in\mathbb R$ and any $f\in H^{1,p}_r(\Omega)$.

\noindent	
3. {\bf Triangle inequality}:
Let functions $f,g\in H^{1,p}_r(\Omega)$. Then
\begin{align*}
& \|(f+g)\mid H^{1,p}_r(\Omega)\|^{\frac{1}{p}} = \||\nabla f + \nabla g|^p \mid H^1_r(\Omega)\|^{\frac{1}{p}} =\\
& \left(\int\limits_\Omega \sup\limits_{t \leq d(x, \partial \Omega)} \left| \int\limits_{B(x,t)} |\nabla f(y) + \nabla g(y)|^p  \Phi_t(x-t)\, dy \right| \, dx \right)^{\frac{1}{p}} \leq \\
&\left(\int\limits_\Omega \sup\limits_{t \leq d(x, \partial \Omega)} \left| \int\limits_{B(x,t)} (|\nabla f(y)| + |\nabla g(y)|)^p  \Phi_t(x-t)\, dy \right| \, dx \right)^{\frac{1}{p}} = \\
& \left(\int\limits_\Omega \sup\limits_{t \leq d(x, \partial \Omega)} \left| \int\limits_{B(x,t)} \left( \left( \Phi_t(x-t)\right)^{\frac{1}{p}}|\nabla f(y)| 
+ \left( \Phi_t(x-t)\right)^{\frac{1}{p}}|\nabla g(y)| \right)^p\, dy \right| \, dx \right)^{\frac{1}{p}}.
\end{align*}

Now, by using the Minkowski inequality, we have

\begin{multline*}
\left(\int\limits_\Omega \sup\limits_{t \leq d(x, \partial \Omega)} \left| \int\limits_{B(x,t)} \left( \left( \Phi_t(x-t)\right)^{\frac{1}{p}}|\nabla f(y)| 
+ \left( \Phi_t(x-t)\right)^{\frac{1}{p}}|\nabla g(y)| \right)^p\, dy \right| \, dx \right)^{\frac{1}{p}}\\
\leq \left(\int\limits_\Omega \sup\limits_{t \leq d(x, \partial \Omega)} \left( \left| \int\limits_{B(x,t)}  \Phi_t(x-t)|\nabla f(y)|^p \, dy \right|^{\frac{1}{p}} + \left| \int\limits_{B(x,t)}  \Phi_t(x-t)|\nabla g(y)|^p \, dy \right|^{\frac{1}{p}} \right)^p \, dx \right)^{\frac{1}{p}} \\
= \left(\int\limits_\Omega \left( \sup\limits_{t \leq d(x, \partial \Omega)} \left| \int\limits_{B(x,t)}  \Phi_t(x-t)|\nabla f(y)|^p \, dy \right|^{\frac{1}{p}} + \sup\limits_{t \leq d(x, \partial \Omega)} \left| \int\limits_{B(x,t)}  \Phi_t(x-t)|\nabla g(y)|^p \, dy \right|^{\frac{1}{p}} \right)^p \, dx \right)^{\frac{1}{p}} \\
\end{multline*}

Using the Minkowski inequality the second time, we obtain

\begin{flushleft}
$$
\|(f+g)\mid H^{1,p}_r(\Omega)\|^{\frac{1}{p}} = \||\nabla f + \nabla g|^p \mid H^1_r(\Omega)\|^{\frac{1}{p}} \leq
$$
\end{flushleft}
\begin{align*}
\leq \left(\int\limits_\Omega \left( \sup\limits_{t \leq d(x, \partial \Omega)} \left| \int\limits_{B(x,t)}  \Phi_t(x-t)|\nabla f(y)|^p \, dy \right|^{\frac{1}{p}} \right)^p \, dx \right)^{\frac{1}{p}} & \\
 + \left( \int\limits_\Omega \left( \sup\limits_{t \leq d(x, \partial \Omega)} \left| \int\limits_{B(x,t)}  \Phi_t(x-t)|\nabla g(y)|^p \, dy \right|^{\frac{1}{p}} \right)^p \, dx \right)^{\frac{1}{p}} & 
\end{align*}
\begin{align*}
= \left(\int\limits_\Omega \sup\limits_{t \leq d(x, \partial \Omega)} \left| \int\limits_{B(x,t)}  \Phi_t(x-t)|\nabla f(y)|^p \, dy \right| \, dx \right)^{\frac{1}{p}}\\
+ \left( \int\limits_\Omega \sup\limits_{t \leq d(x, \partial \Omega)} \left| \int\limits_{B(x,t)}  \Phi_t(x-t)|\nabla g(y)|^p \, dy \right| \, dx \right)^{\frac{1}{p}} \\
= \|\nabla f \mid H^1_r(\Omega)\|^{\frac{1}{p}} + \|\nabla g \mid H^1_r(\Omega)\|^{\frac{1}{p}}.
\end{align*}

\vskip 0.5cm

\subsection{Duality of Hardy and $\BMO$ spaces}

It is well-known, that dual to the Hardy space $H^1(\mathbb{R}^n)$ is a space $\BMO(\mathbb{R}^n)$, see, for example, \cite{St93}. Recall that a locally integrable function $f: \mathbb{R}^n \to \mathbb{R}$ is a function of bounded mean oscillation ($f \in \BMO(\mathbb{R}^n)$) \cite{FS72} if
$$
\|f \mid \BMO(\mathbb{R}^n)\| := \sup\limits_{B} \frac{1}{B}\int_B |f(x)-f_B|\, dx < \infty,
$$
where the supremum is taken over all balls $B$ in $\mathbb{R}^n$ and $f_B = \frac{1}{B}\int_B f(x)\, dx$.

Since we consider the Hardy spaces defined on Lipschitz domains \cite{CKS93}, we formulate the following version of duality (see \cite{M90, C94, CDS05}. 

Let $\Omega$ be a Lipschitz domain of $\mathbb{R}^n$. 
The space $\BMO_z(\Omega)$ is defined as being the space of all functions in $\BMO(\mathbb{R}^n)$ supported in $\overline{\Omega}$, equipped with the norm
$$
\|f \mid \BMO_z(\Omega)\| := \|f \mid \BMO(\mathbb{R}^n)\|.
$$
The dual of the space $H^1_r(\Omega)$ is the space $\BMO_z(\Omega)$.

The space $\BMO_r(\Omega)$ is defined as being the space of all restrictions to $\Omega$ of functions $\BMO(\mathbb{R}^n)$. It is equipped with the norm
$$
\|f \mid \BMO_r(\Omega)\| := \inf \|F \mid \BMO(\mathbb{R}^n)\|,
$$
where the infimum is taken over all functions $F\in \BMO(\mathbb{R}^n)$ such that $F\vert_{\Omega}=f$. In \cite{J80} it was shown, that $BMO_r(\Omega)$ can be described in another way, namely as a space of locally integrable function on $\Omega$ with
$$
\|f \mid \BMO(\Omega)\| := \sup\limits_{Q} \frac{1}{Q}\int_B |f(x)-f_Q|\, dx < \infty,
$$
where the supremum is taken over all cubes $Q \subset \Omega$ with sides parallel to the axes. Then, the dual of the space $H^1_z(\Omega)$ is $\BMO_r(\Omega)$.

\section{$Q$-quasiconformal mappings}

\subsection{Modulus and capacity}

The theory of $Q$-quasiconformal mappings has been extensively developed in recent decades, see, for example,  \cite{MRSY09}. Let us give the  basic definitions.

The linear integral is denoted by
$$
\int\limits_{\gamma}\rho~ds=\sup\int\limits_{\gamma'}\rho~ds=\sup\int\limits_0^{l(\gamma')}\rho(\gamma'(s))~ds
$$
where the supremum is taken over all closed parts $\gamma'$ of $\gamma$ and $l(\gamma')$ is the length of $\gamma'$. Let $\Gamma$ be a family of curves in $\mathbb R^n$. Denote by $adm(\Gamma)$ the set of Borel functions (admissible functions)
$\rho: \mathbb{R}^n \to[0,\infty]$ such that the inequality
$$
\int\limits_{\gamma}\rho~ds\geqslant 1
$$
holds for locally rectifiable curves $\gamma\in\Gamma$.

Let $\Gamma$ be a family of curves in $\overline{\mathbb R^n}$, where $\overline{\mathbb R^n}$ is a one point compactification of the Euclidean space $\mathbb R^n$. The quantity
$$
M(\Gamma)=\inf\int\limits_{\mathbb{R}^n}\rho^{n}~dx
$$
is called the (conformal) module of the family of curves $\Gamma$ \cite{MRSY09}. The infimum is taken over all admissible functions
$\rho\in adm(\Gamma)$.

Let $\Omega$ be a bounded domain in $\mathbb R^n$ and $F_0, F_1$ be disjoint non-empty compact sets in the
closure of $\Omega$. Let $M(\Gamma(F_0,F_1;\Omega))$ stand for the
module of a family of curves which connect $F_0$ and $F_1$ in $\Omega$. Then \cite{MRSY09}
\begin{equation}\label{eq2}
M(\Gamma(F_0,F_1;\Omega)) = \cp_{n}(F_0,F_1;\Omega)\,,
\end{equation}
where $\cp_{n}(F_0,F_1;\Omega)$ is a conformal capacity of the condensor $(F_0,F_1;\Omega))$ \cite{M}.

Recall that a homeomorphism $\varphi: \Omega\to\widetilde{\Omega}$ of domains
$\Omega,\widetilde{\Omega}\subset \mathbb R^n$ is called a $Q$-homeomorphism \cite{MRSY09}, with a non-negative measurable function $Q$, if
$$
M\left(\varphi \Gamma\right)\leqslant \int\limits_{\Omega} Q(x)\cdot
\rho^{n}(x)dx
$$
for every family $\Gamma$ of rectifiable paths in $\Omega$ and every admissible function $\rho$ for $\Gamma$.

\subsection{Mappings of finite distortion}

Suppose a mapping $\varphi:\Omega\to\mathbb{R}^{n}$ belongs to the class $W^{1,1}_{\loc}(\Omega)$. Then the formal Jacobi
matrix $D\varphi(x)$ and its determinant (Jacobian) $J(x,\varphi)$
are well defined at almost all points $x\in\Omega$. The norm $|D\varphi(x)|$ is the operator norm of $D\varphi(x)$,
i.~e.,  $|D\varphi(x)|=\max\{|D\varphi(x)\cdot h| : h\in\mathbb R^n, |h|=1\}$. We also let $l(D\varphi(x))=\min\{|D\varphi(x)\cdot h| : h\in\mathbb R^n, |h|=1\}$. 

Recall that a Sobolev mapping $\varphi:\Omega\to\mathbb{R}^{n}$ is the mapping of finite distortion if $D\varphi(x)=0$ for almost all $x$ from $Z=\{x\in\Omega: J(x,\varphi)=0\}$ \cite{VGR}. 

Let us define two $p$-distortion functions, $1\leq p<\infty$, for Sobolev mappings of finite distortion $\varphi: \Omega \to \widetilde\Omega$.

\noindent
The outer $p$-dilatation
$$
K_p^O(x,\varphi)=
\begin{cases}
\frac{|D\varphi(x)|^p}{|J(x,\varphi)|},& \,\, J(x,\varphi)\ne 0,\\
0,& \,\, J(x,\varphi)= 0.
\end{cases}
$$
The inner $p$-dilatation
$$
K_p^I(x,\varphi)=
\begin{cases}
\frac{|J(x,\varphi)|}{l(D\varphi(x))^p},& \,\, J(x,\varphi)\ne 0,\\
0,& \,\, J(x,\varphi)= 0.
\end{cases}
$$
Note that $K_n^I(x) \leq (K_n^O(x))^{n-1}$ and $K_n^O(x) \leq (K_n^I(x))^{n-1}$. 

The maximal dilatation, or in short the dilatation, of $\varphi$ at $x$ is defined by
$$
K_p(x) = K_p(x,\varphi) = \max(K_p^O(x,\varphi),K_p^I(x,\varphi)).
$$

Let us recall the weak inverse theorem for Sobolev homeomorphisms \cite{GU10} (see, also \cite{CHM}). 

\begin{thm}
Let $\varphi:\Omega\to\widetilde{\Omega}$, where $\Omega$, $\widetilde{\Omega}$ are domains in $\mathbb R^n$, be a homeomorphism of finite distortion which belongs to the class $W^{1,p}_{\loc}(\Omega)$, $p\geq n-1$, and possesses the Luzin $N$-property (an image of a set of measure zero has measure zero). Then the inverse mapping 
$\varphi^{-1}:\widetilde{\Omega}\to\Omega$ be a mapping of finite distortion which belongs to the class $W^{1,1}_{\loc}(\Omega)$.
\end{thm}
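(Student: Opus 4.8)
The plan is to prove the weak inverse theorem by establishing that $\varphi^{-1}$ is weakly differentiable with a controllable distortion, using the regularity and Luzin $N$-property of $\varphi$. First I would recall that a homeomorphism $\varphi \in W^{1,p}_{\loc}(\Omega)$ with $p \geq n-1$ and the Luzin $N$-property satisfies the change of variables formula (the area formula), so that for any nonnegative Borel function $u$ one has $\int_{\Omega} u(x)\,|J(x,\varphi)|\,dx = \int_{\widetilde{\Omega}} u(\varphi^{-1}(y))\,dy$. The finite distortion hypothesis forces $D\varphi(x)=0$ a.e.\ on the zero set $Z$ of the Jacobian, which will be used to control the inverse on the image of the good set where $J(x,\varphi)\neq 0$.

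Next I would show that $\varphi^{-1}$ is absolutely continuous on almost every line parallel to the coordinate axes (the \ACL{} property), which together with $L^1_{\loc}$ integrability of the partial derivatives yields membership in $W^{1,1}_{\loc}(\widetilde{\Omega})$. The standard route is to prove that $\varphi^{-1}$ satisfies a Lusin-type condition and maps null sets to null sets, then to invoke the known criterion that a homeomorphism whose coordinate functions are monotone and which is \ACL{} belongs to the corresponding Sobolev class. The key identity linking the two distortions is the pointwise relation, valid at points $x$ where $J(x,\varphi)\neq 0$ and $\varphi$ is differentiable with invertible $D\varphi(x)$, namely $D\varphi^{-1}(\varphi(x)) = (D\varphi(x))^{-1}$, from which $|D\varphi^{-1}(y)| = l(D\varphi(x))^{-1}$ and $J(y,\varphi^{-1}) = J(x,\varphi)^{-1}$ at $y=\varphi(x)$. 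Estimating $\int_{\widetilde{\Omega}} |D\varphi^{-1}(y)|\,dy$ by pulling back via the area formula reduces the integral to $\int_{\Omega} l(D\varphi(x))^{-1}\,|J(x,\varphi)|\,dx$, which is finite on compact subsets precisely because $p \geq n-1$ controls the cofactor matrix $\adj D\varphi$ whose entries are $(n-1)$-fold products of the partials of $\varphi$.

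The finite distortion conclusion for $\varphi^{-1}$ then follows from the observation that the set where $J(y,\varphi^{-1}) = 0$ corresponds, under the $N$-property and its analogue for $\varphi^{-1}$, to the image of $Z$ together with a null set, and on this set the above pointwise formula shows $D\varphi^{-1}(y)=0$ a.e.\ as well. I expect the main obstacle to be the verification that $\varphi^{-1}$ itself enjoys the Luzin $N^{-1}$-property and maps null sets to null sets in the reverse direction, since this is exactly what guarantees that the zero set of $J(\cdot,\varphi^{-1})$ is genuinely negligible and that no mass of the derivative is concentrated on a set invisible to the area formula; this is where the hypothesis $p \geq n-1$ is essential, as it ensures the adjugate $\adj D\varphi \in L^1_{\loc}$ and hence the requisite integrability of the formal derivative of the inverse. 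Once integrability and \ACL{} are secured, the identification of $\varphi^{-1}$ as a $W^{1,1}_{\loc}$ mapping of finite distortion is routine.
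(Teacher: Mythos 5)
First, a point of comparison: the paper does not prove this statement at all --- it is quoted as a known result with references to \cite{GU10} and to Cs\"ornyei--Hencl--Mal\'y \cite{CHM} --- so your attempt can only be measured against the arguments in those papers. Your outline correctly assembles the ingredients that make the \emph{integrability} work: the area formula for a $W^{1,p}_{\loc}$ homeomorphism with the Luzin $N$-property, the identity $D\varphi^{-1}(\varphi(x))=(D\varphi(x))^{-1}$ off the zero set $Z$ of the Jacobian, the resulting bound
$$
\int\limits_{\varphi(A\setminus Z)}|D\varphi^{-1}(y)|\,dy=\int\limits_{A\setminus Z}\frac{|J(x,\varphi)|}{l(D\varphi(x))}\,dx=\int\limits_{A\setminus Z}|\adj D\varphi(x)|\,dx\leq\int\limits_{A}|D\varphi(x)|^{n-1}\,dx,
$$
and the role of $p\geq n-1$ in making the right-hand side finite on compact sets. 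You also get $|\varphi(Z)|=\int_Z|J(x,\varphi)|\,dx=0$ from the $N$-property, which correctly disposes of the degenerate set.

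The genuine gap is the step you dispatch in one sentence: proving that $\varphi^{-1}$ is actually \ACL, i.e. that its distributional derivative exists and is represented by the integrable candidate above. A ``Lusin-type condition'' plus ``maps null sets to null sets'' plus monotonicity of the coordinate functions does not yield absolute continuity on almost every line: the $N^{-1}$-property controls images of null sets, not the variation of $\varphi^{-1}$ along segments, and the \ACL-characterization of Sobolev functions presupposes the very \ACL\ property you are trying to establish. This is exactly the hard content of \cite{CHM}: one first shows that a $W^{1,n-1}_{\loc}$ homeomorphism has an inverse of locally bounded variation, by estimating the variation of $\varphi^{-1}$ restricted to almost every line through a coarea/degree argument that bounds it by an integral of $|\adj D\varphi|$ over preimages of hyperplanes, and only then uses the finite-distortion hypothesis to show that the singular part of the derivative measure vanishes, upgrading $BV_{\loc}$ to $W^{1,1}_{\loc}$. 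Without some version of that variation estimate, your computation only bounds the absolutely continuous part of $D\varphi^{-1}$ and cannot exclude a singular part --- which is precisely what occurs for $W^{1,n-1}$ homeomorphisms \emph{without} finite distortion, whose inverses are in general only $BV$. The final finite-distortion conclusion for $\varphi^{-1}$ is fine once $W^{1,1}_{\loc}$ is secured, since almost every $y$ is the image of a point of $\Omega\setminus Z$ where $J(y,\varphi^{-1})\neq 0$.
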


Recall that homeomorphisms $\varphi:\Omega\to\widetilde{\Omega}$ of the class $W^{1,n}_{\loc}(\Omega)$ possess the Luzin $N$-property (an image of a set of measure zero has measure zero) \cite{VGR}.

\section{$\BMO$-quasiconformal mappings and composition operator}

Given a function $Q: \Omega \to [1,\infty]$, a sense-preserving homeomorphism $\varphi: \Omega \to \widetilde{\Omega}$ is called to be $Q$-quasiconformal \cite{MRSY01}, if $\varphi \in W^{1,n}_{\loc}(\Omega)$ and $K_n(x) \leq Q(x)$ for almost all $x\in\Omega$.
If $\varphi$ is $Q$-quasiconformal with $Q\in\BMO_r(\Omega)$, than $\varphi$ is said to be a $\BMO$-quasiconformal mapping.
In \cite{MRSY09}, it was proven that every $\BMO$-quasiconformal mapping is a $Q$-homeomorphism with some $Q \in \BMO_r$.

The first theorem represents a description of composition operators generated by $\BMO$-quasiconformal homeomorphism.

\begin{thm}\label{thm1}
Let $\Omega\subset\mathbb R^n$ be a Lipschitz bounded domain, $\widetilde{\Omega}\subset\mathbb R^n$ be a bounded domain. Suppose there exists $\BMO$-quasiconformal homeomorphism $\varphi: \Omega \to \widetilde{\Omega}$. Then the inverse mapping $\varphi^{-1}: \widetilde{\Omega} \to\Omega$ generates by the composition rule $\left(\varphi^{-1}\right)^{\ast}=f\circ\varphi^{-1}$ a bounded composition operator 
$$
\left(\varphi^{-1}\right)^{\ast}:H^{1,n}_z({\Omega})\cap \Lip ({\Omega}) \to  L^{1,n}(\widetilde{\Omega}),
$$
and the inequality
$$
\|f\circ \varphi^{-1}\mid L^{1,n}(\widetilde{\Omega})\|\leq 
\|Q \mid {\BMO_z({\Omega})}\|^{\frac{1}{n}} \|f  \mid {H^{1,n}_z({\Omega})}\|
$$
holds for any Lipschitz function $f\in \Lip ({\Omega})$.
\end{thm}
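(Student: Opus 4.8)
The plan is to reduce the assertion to a weighted $L^n$-estimate on $\Omega$ by a change of variables, to identify the resulting weight with the inner $n$-dilatation of $\varphi$, and then to close the estimate by the $H^1$--$\BMO$ duality recalled above. Throughout, the Lipschitz hypothesis on $f$ plays a double role: it makes the chain rule for the composition legitimate in spite of the low regularity of $\varphi^{-1}$, and it forces the integrand appearing below to be genuinely summable, so that the duality pairing actually reproduces the integral.

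First I would fix the regularity of the inverse. Since $\varphi$ is $\BMO$-quasiconformal it belongs to $W^{1,n}_{\loc}(\Omega)$, is a homeomorphism of finite distortion, and possesses the Luzin $N$-property; hence, by the weak inverse theorem stated above, $\varphi^{-1}\in W^{1,1}_{\loc}(\widetilde{\Omega})$ is itself a mapping of finite distortion. For a fixed $f\in\Lip(\Omega)$ I would verify that $g:=f\circ\varphi^{-1}$ is weakly differentiable on $\widetilde{\Omega}$ and satisfies the chain rule
$$
\nabla g(y)=\left(D\varphi^{-1}(y)\right)^{\ast}\nabla f\bigl(\varphi^{-1}(y)\bigr)\quad\text{for almost all }y\in\widetilde{\Omega},
$$
so that $|\nabla g(y)|\le |D\varphi^{-1}(y)|\,\bigl|\nabla f(\varphi^{-1}(y))\bigr|$. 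Here the Luzin $N$-property of $\varphi$ guarantees that the null set where $f$ fails to be differentiable pulls back to a null set, so the pointwise form of the chain rule is valid.

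Next I would pass to $\Omega$ by the change of variables $y=\varphi(x)$, which is legitimate because $\varphi$ enjoys the Luzin $N$-property:
$$
\int\limits_{\widetilde{\Omega}}|\nabla g(y)|^{n}\,dy\le\int\limits_{\widetilde{\Omega}}|D\varphi^{-1}(y)|^{n}\bigl|\nabla f(\varphi^{-1}(y))\bigr|^{n}\,dy=\int\limits_{\Omega}|D\varphi^{-1}(\varphi(x))|^{n}\,|\nabla f(x)|^{n}\,|J(x,\varphi)|\,dx.
$$
At a point $x$ with $J(x,\varphi)\ne 0$ one has $D\varphi^{-1}(\varphi(x))=(D\varphi(x))^{-1}$, whence $|D\varphi^{-1}(\varphi(x))|=1/l(D\varphi(x))$ and therefore $|D\varphi^{-1}(\varphi(x))|^{n}|J(x,\varphi)|=K_n^{I}(x,\varphi)$; on the set $Z=\{J(x,\varphi)=0\}$ the finite-distortion condition $D\varphi=0$ makes the integrand vanish. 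Using $K_n^{I}(x,\varphi)\le K_n(x,\varphi)\le Q(x)$ almost everywhere, this yields
$$
\int\limits_{\widetilde{\Omega}}|\nabla g(y)|^{n}\,dy\le\int\limits_{\Omega}K_n^{I}(x,\varphi)\,|\nabla f(x)|^{n}\,dx\le\int\limits_{\Omega}Q(x)\,|\nabla f(x)|^{n}\,dx.
$$

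Finally I would invoke the duality. Since $f\in H^{1,n}_z(\Omega)$, the density $|\nabla f|^{n}$ belongs to $H^1_z(\Omega)$, while $Q\in\BMO_z(\Omega)\subset\BMO_r(\Omega)=(H^1_z(\Omega))^{\ast}$ with $\|Q\mid\BMO_r(\Omega)\|\le\|Q\mid\BMO_z(\Omega)\|$. Because $f$ is Lipschitz the product $Q\,|\nabla f|^{n}$ lies in $L^1(\Omega)$, so the last integral coincides with the duality pairing and is bounded by $\|Q\mid\BMO_z(\Omega)\|\,\||\nabla f|^{n}\mid H^1_z(\Omega)\|$; taking $n$-th roots gives exactly the asserted inequality. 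I expect the main obstacle to be not the geometric reduction, which is routine quasiconformal analysis, but the closing duality argument: one must check that the $H^1$--$\BMO$ pairing for the \emph{local} Hardy spaces on the domain is normalised so as to reproduce $\int_\Omega Q\,|\nabla f|^n$ without an extraneous constant, and one must fully justify the almost-everywhere differentiability and the change of variables for the merely $W^{1,1}_{\loc}$ inverse map before the pairing is applied.
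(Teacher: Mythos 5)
Your proposal follows essentially the same route as the paper's own proof: regularity of $\varphi^{-1}$ via the weak inverse theorem, the chain rule and change of variables to produce the weight $K_n^I(x,\varphi)\leq Q(x)$, and then the $H^1_z$--$\BMO_r$ duality to close the estimate. If anything, you are slightly more careful than the paper at the final step, since the duality naturally yields the $\BMO_r$-norm and you explicitly pass to the $\BMO_z$-norm via $\|Q\mid\BMO_r(\Omega)\|\leq\|Q\mid\BMO_z(\Omega)\|$, whereas the paper's proof ends with the $\BMO_r$-norm while its statement displays the $\BMO_z$-norm.
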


\begin{proof} 
Since $\varphi\in  W^{1,n}_{\loc}(\Omega)$ then $\varphi$ possesses the Luzin $N$-property, then the composition $f\circ\varphi^{-1}$ is well defined a.~e. in $\widetilde{\Omega}$. Because $\varphi\in  W^{1,n}_{\loc}(\Omega)$ and has a finite distortion, then $\varphi^{-1}: \widetilde{\Omega} \to\Omega$ belongs to $W^{1,1}_{\loc}(\widetilde{\Omega})$ \cite{GU10}.

Now, let there be given a Lipschitz function $g\in H^{1,n}_z({\Omega})$. Then $g\circ\varphi^{-1}$ is weakly differentiable in $\Omega$, and as long as $\varphi$ has the Luzin $N$-property, the chain rule holds \cite{HK14}. Hence 
$$
\|g \circ \varphi^{-1}\mid L^{1,n}(\widetilde{\Omega})\|^n=\int\limits_{\widetilde{\Omega}}|\nabla g\circ\varphi^{-1}(y)|^n~dy
\leq \int_{\widetilde{\Omega}} |\nabla g|^n(\varphi^{-1}(y))|D\varphi^{-1}(y)|^n\, dy.
$$

By the definition of $\BMO$-quasiconformal mappings there exists measurable function $Q \in \BMO_r({\Omega})$, such that $K_n^I(x)\leq Q(x)$ for almost all $x\in{\Omega}$.
Using the change of variables formula \cite{F69,H93}, we obtain
\begin{multline*}
\int_{\widetilde{\Omega}} |\nabla g|^n(\varphi^{-1}(y))|D\varphi^{-1}(y)|^n\, dy=
\int_\Omega |\nabla g|^n(x)|D\varphi^{-1}(\varphi(x))|^n |J(x,\varphi)|\, dx
\\=
\int_\Omega |\nabla g|^n(x)\frac{|J(x,\varphi)|}{l(D\varphi(x))^n }\, dx\leq \int_\Omega |\nabla g|^n(x)Q(x)~ dx.
\end{multline*}
Now, by the duality of Hardy spaces $H^1_z$ and $\BMO_r$-spaces \cite{C94}, we have
$$
\int_\Omega |\nabla g|^n(x)Q(x)~ dx\leq \|Q \mid {\BMO_r({\Omega})}\| \cdot \|f \mid {H^{1,n}_z({\Omega})}\|^n.
$$
Hence 
$$
\|f \circ \varphi^{-1}\mid L^{1,n}(\widetilde{\Omega})\|\leq \|Q \mid {\BMO_r({\Omega})}\|^{\frac{1}{n}} \|f \mid {H^{1,n}_z({\Omega})}\|
$$
for any Lipschitz function $f\in H^{1,n}_z({\Omega})$.
\end{proof}

Let $\varphi: \Omega \to \widetilde{\Omega}$ be a homeomorphism. Then $\varphi$
is called to be a $\BMO_p$-quasiconformal mapping, if $\varphi \in W^{1,p}_{\loc}(\Omega)$ and $K_p(x) \leq Q(x)$ for almost all $x\in\Omega$ and for some function $Q\in\BMO_r(\Omega)$.

In the case of $\BMO_p$-quasiconformal mappings, we require  additional assumptions on regularity of a mapping $\varphi$ in the case $n-1\leq q<n$.

\begin{thm}\label{thmp}
Let $\Omega\subset\mathbb R^n$ be a Lipschitz bounded domain, $\widetilde{\Omega}\subset\mathbb R^n$ be a bounded domain. Suppose there exists $\BMO_p$-quasiconformal homeomorphism $\varphi: \Omega \to \widetilde{\Omega}$, $p\geq n-1$, which possesses the Luzin $N$--property. Then the inverse mapping $\varphi^{-1}: \widetilde{\Omega} \to\Omega$ generates by the composition rule $\left(\varphi^{-1}\right)^{\ast}=f\circ\varphi^{-1}$ a bounded composition operator 
$$
\left(\varphi^{-1}\right)^{\ast}:H^{1,p}_z({\Omega})\cap \Lip ({\Omega}) \to  L^{1,p}(\widetilde{\Omega}),
$$
and the inequality
$$
\|f\circ \varphi^{-1}\mid L^{1,p}(\widetilde{\Omega})\|\leq 
\|Q \mid {\BMO_r({\Omega})}\|^{\frac{1}{p}} \|f  \mid {H^{1,p}_z({\Omega})}\|
$$
holds for any Lipschitz function $f\in \Lip ({\Omega})$.
\end{thm}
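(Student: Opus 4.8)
The plan is to imitate the proof of Theorem \ref{thm1} almost verbatim, replacing the exponent $n$ by the general exponent $p$ and replacing the inner $n$-dilatation by the inner $p$-dilatation. Given a Lipschitz function $f\in H^{1,p}_z(\Omega)$, the first step is to justify that the composition $f\circ\varphi^{-1}$ is weakly differentiable on $\widetilde{\Omega}$ and that the chain rule applies. Here the role of the Luzin $N$-property is crucial: in Theorem \ref{thm1} it came for free because $\varphi\in W^{1,n}_{\loc}(\Omega)$ automatically has property $N$, but for $p<n$ it must be assumed, which is exactly why the hypothesis ``possesses the Luzin $N$-property'' was added. With property $N$ in hand, the weak inverse theorem (the unlabeled theorem preceding \eqref{eq2}, valid for $p\geq n-1$) guarantees $\varphi^{-1}\in W^{1,1}_{\loc}(\widetilde{\Omega})$ and is a mapping of finite distortion, so $f\circ\varphi^{-1}$ is well defined a.\,e.\ and the chain rule holds.

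The second step is the pointwise gradient estimate followed by the change of variables. As in the $n$-case one writes
$$
\|f\circ\varphi^{-1}\mid L^{1,p}(\widetilde{\Omega})\|^p=\int_{\widetilde{\Omega}}|\nabla(f\circ\varphi^{-1})(y)|^p\,dy\leq\int_{\widetilde{\Omega}}|\nabla f|^p(\varphi^{-1}(y))\,|D\varphi^{-1}(y)|^p\,dy,
$$
and then applies the change of variables formula (using property $N$ for $\varphi$). The key algebraic identity is that, at a point $x=\varphi^{-1}(y)$ where $J(x,\varphi)\neq 0$, one has $|D\varphi^{-1}(\varphi(x))|=1/l(D\varphi(x))$, so that
$$
|D\varphi^{-1}(\varphi(x))|^p\,|J(x,\varphi)|=\frac{|J(x,\varphi)|}{l(D\varphi(x))^p}=K_p^I(x,\varphi)\leq K_p(x)\leq Q(x)
$$
almost everywhere by the definition of $\BMO_p$-quasiconformality. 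This reduces the integral to $\int_\Omega|\nabla f|^p(x)\,Q(x)\,dx$.

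The third and final step is to invoke the duality between $H^1_z(\Omega)$ and $\BMO_r(\Omega)$ to bound
$$
\int_\Omega|\nabla f|^p(x)\,Q(x)\,dx\leq\|Q\mid\BMO_r(\Omega)\|\cdot\||\nabla f|^p\mid H^1_z(\Omega)\|=\|Q\mid\BMO_r(\Omega)\|\cdot\|f\mid H^{1,p}_z(\Omega)\|^p,
$$
since $|\nabla f|^p\in H^1_z(\Omega)$ precisely by definition of the homogeneous Hardy-Sobolev seminorm. Taking $p$-th roots yields the claimed inequality.

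The part that requires genuine attention, rather than routine transcription, is the change of variables and chain rule for $p<n$. For $p=n$ everything is classical, but in the range $n-1\leq p<n$ the mapping $\varphi$ need not lie in $W^{1,n}_{\loc}$, so the Luzin $N$-property can fail and must be imposed as a hypothesis; this is the only place where the extra assumption of the theorem is consumed. I would therefore verify carefully that property $N$ for $\varphi$ suffices to validate both the change of variables formula \cite{F69,H93} and the almost-everywhere chain rule \cite{HK14} in this lower-regularity setting, and that the weak inverse theorem indeed applies to produce $\varphi^{-1}\in W^{1,1}_{\loc}(\widetilde{\Omega})$. Once these regularity issues are settled, the remaining estimates are identical to those in Theorem \ref{thm1}.
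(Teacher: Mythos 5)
Your proposal is correct and follows essentially the same route as the paper: regularity of $\varphi^{-1}$ via the weak inverse theorem, the chain rule and change of variables justified by the Luzin $N$-property, the identity $|D\varphi^{-1}(\varphi(x))|=1/l(D\varphi(x))$ to produce the inner $p$-dilatation, and the $H^1_z$--$\BMO_r$ duality to close the estimate. Your explicit remark that the $N$-property must be assumed (rather than deduced) in the range $n-1\leq p<n$ is exactly the point of the added hypothesis in this theorem.
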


\begin{proof} 
Since $\varphi$ possesses the Luzin $N$-property, then the composition $f\circ\varphi^{-1}$ is well defined a.~e. in $\widetilde{\Omega}$.
Because $\varphi\in  W^{1,p}_{\loc}(\Omega)$, $p\geq n-1$, has a finite distortion and possess the Luzin $N$-property, $\varphi^{-1}: \widetilde{\Omega} \to\Omega$ belongs to $W^{1,1}_{\loc}(\widetilde{\Omega})$ \cite{GU10}.

Now, let there be given a Lipschitz function $g\in H^{1,p}_z({\Omega})$. Then $g\circ\varphi^{-1}$ is weakly differentiable in $\Omega$, and as long as $\varphi$ has the Luzin $N$-property, the chain rule holds \cite{HK14}. Hence 
$$
\|g \circ \varphi^{-1}\mid L^{1,p}(\widetilde{\Omega})\|^p=\int\limits_{\widetilde{\Omega}}|\nabla g\circ\varphi^{-1}(y)|^p~dy
\leq \int_{\widetilde{\Omega}} |\nabla g|^p(\varphi^{-1}(y))|D\varphi^{-1}(y)|^p\, dy.
$$
By the definition of $\BMO$-quasiconformal mappings there exists measurable function $Q \in \BMO_r({\Omega})$, such that $K_p^I(x)\leq Q(x)$ for almost all $x\in{\Omega}$. Using the change of variables formula \cite{F69,H93}, we obtain
\begin{multline*}
\int_{\widetilde{\Omega}} |\nabla g|^p(\varphi^{-1}(y))|D\varphi^{-1}(y)|^p\, dy=
\int_\Omega |\nabla g|^p(x)|D\varphi^{-1}(\varphi(x))|^p |J(x,\varphi)|\, dx
\\=
\int_\Omega |\nabla g|^p(x)\frac{|J(x,\varphi)|}{l(D\varphi(x))^p }\, dx\leq \int_\Omega |\nabla g|^p(x)Q(x)~ dx.
\end{multline*}
Now, by the duality of Hardy spaces $H^1_z$ and $\BMO_r$-spaces \cite{C94}, we have
$$
\int_\Omega |\nabla g|^p(x)Q(x)~ dx\leq \|Q \mid {\BMO_r({\Omega})}\| \cdot \|f \mid {H^{1,p}_z({\Omega})}\|^p.
$$
Hence 
$$
\|f \circ \varphi^{-1}\mid L^{1,p}(\widetilde{\Omega})\|\leq \|Q \mid {\BMO_r({\Omega})}\|^{\frac{1}{p}} \|f \mid {H^{1,p}_z({\Omega})}\|
$$
for any Lipschitz function $f\in H^{1,p}_z({\Omega})$.
\end{proof}

Using the duality between $H^1_r(\Omega)$ and $\BMO_z(\Omega)$ in the same manner we obtain the next two results:

\begin{thm}
Let $\Omega\subset\mathbb R^n$ be a Lipschitz bounded domain, $\widetilde{\Omega}\subset\mathbb R^n$ be a bounded domain. Suppose there exists $\BMO$-quasiconformal homeomorphism $\varphi: \Omega \to \widetilde{\Omega}$ with $Q \in \BMO_z(\Omega)$. Then the inverse mapping $\varphi^{-1}: \widetilde{\Omega} \to\Omega$ generates by the composition rule $\left(\varphi^{-1}\right)^{\ast}=f\circ\varphi^{-1}$ a bounded composition operator 
$$
\left(\varphi^{-1}\right)^{\ast}:H^{1,n}_r({\Omega})\cap \Lip ({\Omega}) \to  L^{1,n}(\widetilde{\Omega}),
$$
and the inequality
$$
\|f\circ \varphi^{-1}\mid L^{1,n}(\widetilde{\Omega})\|\leq 
\|Q \mid {\BMO_z({\Omega})}\|^{\frac{1}{n}} \|f  \mid {H^{1,n}_r({\Omega})}\|
$$
holds for any Lipschitz function $f\in \Lip ({\Omega})$.
\end{thm}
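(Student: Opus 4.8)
The plan is to mirror the proof of Theorem~\ref{thm1} step for step, the only change being that the concluding estimate uses the duality between $H^1_r(\Omega)$ and $\BMO_z(\Omega)$ in place of the duality between $H^1_z(\Omega)$ and $\BMO_r(\Omega)$. First I would record the regularity of the mappings: since $\varphi\in W^{1,n}_{\loc}(\Omega)$ it possesses the Luzin $N$-property, so $f\circ\varphi^{-1}$ is well defined almost everywhere in $\widetilde{\Omega}$, and by the weak inverse theorem the inverse $\varphi^{-1}:\widetilde{\Omega}\to\Omega$ belongs to $W^{1,1}_{\loc}(\widetilde{\Omega})$ and is itself of finite distortion.

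Given a Lipschitz function $f\in H^{1,n}_r(\Omega)$, I would apply the chain rule (legitimate thanks to the Luzin $N$-property) to obtain
$$\|f\circ\varphi^{-1}\mid L^{1,n}(\widetilde{\Omega})\|^n=\int_{\widetilde{\Omega}}|\nabla(f\circ\varphi^{-1})(y)|^n\,dy\leq\int_{\widetilde{\Omega}}|\nabla f|^n(\varphi^{-1}(y))\,|D\varphi^{-1}(y)|^n\,dy,$$
and then change variables $y=\varphi(x)$. Using that $\varphi$ is $Q$-quasiconformal with inner dilatation $K_n^I(x,\varphi)=|J(x,\varphi)|/l(D\varphi(x))^n\leq Q(x)$, the right-hand side is dominated by $\int_\Omega|\nabla f|^n(x)\,Q(x)\,dx$.

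The single genuinely different step is the final one. Here the hypothesis is $Q\in\BMO_z(\Omega)$, so I would pair it against $|\nabla f|^n\in H^1_r(\Omega)$ via the duality $\bigl(H^1_r(\Omega)\bigr)^{\ast}=\BMO_z(\Omega)$, giving $\int_\Omega|\nabla f|^n\,Q\,dx\leq\|Q\mid\BMO_z(\Omega)\|\cdot\||\nabla f|^n\mid H^1_r(\Omega)\|=\|Q\mid\BMO_z(\Omega)\|\cdot\|f\mid H^{1,n}_r(\Omega)\|^n$. Extracting the $n$-th root yields the asserted inequality and the boundedness of $\left(\varphi^{-1}\right)^{\ast}$.

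I expect the only point requiring care to be the correct matching of the duality: one must verify that the hypothesis $Q\in\BMO_z(\Omega)$ (equivalently, that $Q$ is supported in $\overline{\Omega}$) is precisely what licenses the pairing with $H^1_r(\Omega)$, so that the constant produced is $\|Q\mid\BMO_z(\Omega)\|^{\frac{1}{n}}$ rather than the $\BMO_r$-norm appearing in Theorem~\ref{thm1}. All the remaining ingredients---the weak inverse theorem, the chain rule, and the change of variables---are identical to the earlier argument and transfer without modification.
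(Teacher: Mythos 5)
Your proposal is correct and coincides with the paper's intended argument: the paper itself states that this theorem is obtained ``in the same manner'' as Theorem~\ref{thm1}, with the duality $\bigl(H^1_r(\Omega)\bigr)^{\ast}=\BMO_z(\Omega)$ replacing the pairing of $H^1_z(\Omega)$ with $\BMO_r(\Omega)$ in the final estimate, which is exactly the substitution you make. All other steps (Luzin $N$-property, the weak inverse theorem, the chain rule, the change of variables, and the bound $K_n^I\leq Q$) are carried over unchanged, as you indicate.
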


\begin{thm}
Let $\Omega\subset\mathbb R^n$ be a Lipschitz bounded domain, $\widetilde{\Omega}\subset\mathbb R^n$ be a bounded domain. Suppose there exists $\BMO_p$-quasiconformal homeomorphism $\varphi: \Omega \to \widetilde{\Omega}$, $p\geq n-1$, with $Q \in \BMO_z(\Omega)$, which possesses the Luzin $N$--property. Then the inverse mapping $\varphi^{-1}: \widetilde{\Omega} \to\Omega$ generates by the composition rule $\left(\varphi^{-1}\right)^{\ast}=f\circ\varphi^{-1}$ a bounded composition operator 
$$
\left(\varphi^{-1}\right)^{\ast}:H^{1,p}_r({\Omega})\cap \Lip ({\Omega}) \to  L^{1,p}(\widetilde{\Omega}),
$$
and the inequality
$$
\|f\circ \varphi^{-1}\mid L^{1,p}(\widetilde{\Omega})\|\leq 
\|Q \mid {\BMO_z({\Omega})}\|^{\frac{1}{p}} \|f  \mid {H^{1,p}_r({\Omega})}\|
$$
holds for any Lipschitz function $f\in \Lip ({\Omega})$.
\end{thm}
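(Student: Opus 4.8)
The plan is to run the argument of Theorem~\ref{thmp} essentially verbatim, the only change being that the concluding estimate rests on the duality of $H^1_r(\Omega)$ with $\BMO_z(\Omega)$ rather than on the duality of $H^1_z(\Omega)$ with $\BMO_r(\Omega)$. First I would record the regularity of the inverse: since $\varphi:\Omega\to\widetilde\Omega$ is a homeomorphism of finite distortion in $W^{1,p}_{\loc}(\Omega)$ with $p\geq n-1$ that possesses the Luzin $N$-property, the weak inverse theorem recalled above guarantees that $\varphi^{-1}\in W^{1,1}_{\loc}(\widetilde\Omega)$ is itself a mapping of finite distortion. The Luzin $N$-property of $\varphi$ then makes $f\circ\varphi^{-1}$ well defined almost everywhere and legitimizes the chain rule, giving the weak derivative bound $|\nabla(f\circ\varphi^{-1})(y)|\le |\nabla f|(\varphi^{-1}(y))\,|D\varphi^{-1}(y)|$ a.e. in $\widetilde\Omega$.

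Next I would raise this bound to the $p$-th power, integrate over $\widetilde\Omega$, and apply the change of variables formula to transport the integral to $\Omega$ through $y=\varphi(x)$, which produces the Jacobian factor $|J(x,\varphi)|$. Using the a.e. identity $|D\varphi^{-1}(\varphi(x))|=1/l(D\varphi(x))$, coming from $D\varphi^{-1}(\varphi(x))\cdot D\varphi(x)=I$, the integrand becomes $|\nabla f|^p(x)\,K_p^I(x,\varphi)$; the $\BMO_p$-quasiconformality hypothesis $K_p(x)\le Q(x)$ forces $K_p^I(x)\le Q(x)$ and hence bounds the integral by $\int_\Omega |\nabla f|^p(x)\,Q(x)\,dx$.

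The final step is where the choice $Q\in\BMO_z(\Omega)$ is used. Because $f$ is Lipschitz and belongs to $H^{1,p}_r(\Omega)$, the function $|\nabla f|^p$ lies in $H^1_r(\Omega)$, so the duality pairing of $H^1_r(\Omega)$ with $\BMO_z(\Omega)$ yields $\int_\Omega |\nabla f|^p Q\,dx\le \|Q\mid\BMO_z(\Omega)\|\,\||\nabla f|^p\mid H^1_r(\Omega)\|=\|Q\mid\BMO_z(\Omega)\|\,\|f\mid H^{1,p}_r(\Omega)\|^p$. Collecting the chain of inequalities for $\|f\circ\varphi^{-1}\mid L^{1,p}(\widetilde\Omega)\|^p$ and taking $p$-th roots delivers the asserted estimate.

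I expect the genuine obstacle to lie not in the algebra but in the a.e. justification of the chain rule and the change of variables for the merely $W^{1,1}_{\loc}$-regular inverse $\varphi^{-1}$: one must verify that the degenerate set $Z=\{J(x,\varphi)=0\}$ and the set where differentiability fails are transported to null sets, which is precisely the role of the finite-distortion hypothesis together with the Luzin $N$-property. Once these measure-theoretic points are secured, the proof is identical to those of Theorems~\ref{thm1} and~\ref{thmp} with the two dualities interchanged.
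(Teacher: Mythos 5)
Your proposal is correct and follows exactly the route the paper intends: the paper does not write out a separate proof for this statement, but merely remarks that it is obtained "in the same manner" as Theorem \ref{thmp} by replacing the duality of $H^1_z(\Omega)$ with $\BMO_r(\Omega)$ by the duality of $H^1_r(\Omega)$ with $\BMO_z(\Omega)$, which is precisely what you do. Your closing remarks on the chain rule and change-of-variables justifications likewise mirror the role the Luzin $N$-property and finite-distortion hypotheses play in the proofs of Theorems \ref{thm1} and \ref{thmp}.
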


We note the following regularity results also:

\begin{thm}
Given the mapping $\varphi : \Omega \to \widetilde\Omega$. 
\begin{enumerate}
\item If the composition operator $\varphi^\ast : H^{1,p}_r(\widetilde\Omega) \to L^{1,p}(\Omega)$  is bounded, then $\varphi \in L^{1,p}(\Omega)$.
\item If the composition operator $\varphi^\ast : H^{1,p}_r(\widetilde\Omega) \to H^{1,p}_r(\Omega)$ is bounded, then $\varphi \in H^{1,p}_r(\Omega)$.
\end{enumerate}
\end{thm}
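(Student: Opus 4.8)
The plan is to test the boundedness hypothesis on the coordinate functions of $\widetilde\Omega$ and read off the regularity of $\varphi$ componentwise. Write $\pi_i \colon \widetilde\Omega \to \mathbb R$, $\pi_i(y) = y_i$, for $i = 1, \dots, n$. Since $\nabla \pi_i = e_i$ is constant, we have $|\nabla \pi_i|^p \equiv 1$, and $\varphi^\ast(\pi_i) = \pi_i \circ \varphi = \varphi_i$ is precisely the $i$-th component of $\varphi$. Thus, once I know that $\pi_i$ lies in the domain $H^{1,p}_r(\widetilde\Omega)$ of the operator, boundedness forces $\varphi_i = \varphi^\ast(\pi_i)$ into the target space; as this holds for every $i$, the vector map $\varphi = (\varphi_1, \dots, \varphi_n)$ lands in the same space.

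First I would verify that each $\pi_i \in H^{1,p}_r(\widetilde\Omega)$, which by definition amounts to checking that the constant function $\mathbf 1$ belongs to $H^1_r(\widetilde\Omega)$. Using the maximal-function description $\|h \mid H^1_r(\widetilde\Omega)\| = \|\mathcal M_{\widetilde\Omega} h \mid L^1(\widetilde\Omega)\|$ with $\mathcal M_{\widetilde\Omega} h(y) = \sup_{t \le d(y, \partial \widetilde\Omega)} |\Phi_t * h(y)|$, observe that for $t \le d(y, \partial \widetilde\Omega)$ the convolution only samples $h$ inside $\widetilde\Omega$, so by the normalization $\int \Phi = 1$ one gets $\Phi_t * \mathbf 1(y) = \int \Phi = 1$. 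Hence $\mathcal M_{\widetilde\Omega}\mathbf 1 \equiv 1$, and $\|\mathbf 1 \mid H^1_r(\widetilde\Omega)\| = |\widetilde\Omega| < \infty$ because $\widetilde\Omega$ is bounded. Consequently $\|\pi_i \mid H^{1,p}_r(\widetilde\Omega)\| = |\widetilde\Omega|^{1/p} < \infty$.

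With this in hand both assertions follow at once. For (1), boundedness of $\varphi^\ast \colon H^{1,p}_r(\widetilde\Omega) \to L^{1,p}(\Omega)$ gives $\varphi_i = \varphi^\ast(\pi_i) \in L^{1,p}(\Omega)$, i.e. $\nabla \varphi_i \in L^p(\Omega)$, for each $i$; combining over $i$ yields $|D\varphi| \in L^p(\Omega)$, that is $\varphi \in L^{1,p}(\Omega)$. For (2), the same test functions give $\varphi_i = \varphi^\ast(\pi_i) \in H^{1,p}_r(\Omega)$ for each $i$, and therefore $\varphi \in H^{1,p}_r(\Omega)$.

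The only genuinely delicate point is the membership $\mathbf 1 \in H^1_r(\widetilde\Omega)$ used above, and it is where I expect the argument to hinge. It is essential that we work with the restriction-type space $H^1_r$ and that $\widetilde\Omega$ is bounded: a constant is never in $H^1(\mathbb R^n)$, since it fails the zero-mean condition $\widehat h(0) = 0$, and likewise the indicator $\mathbf 1_{\widetilde\Omega}$ does not lie in $H^1_z(\widetilde\Omega)$ because $\int \mathbf 1_{\widetilde\Omega} = |\widetilde\Omega| \ne 0$. Thus the coordinate functions would fail to be admissible test functions for the $z$-version of the space, which explains why the statement is phrased for $H^{1,p}_r$. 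I would also note that no a priori regularity of $\varphi$ is needed: weak differentiability of the components $\varphi_i$ is automatic once the bounded operator produces them as elements of $L^{1,p}(\Omega)$ or $H^{1,p}_r(\Omega)$, and boundedness of $\widetilde\Omega$ guarantees the $\varphi_i$ are at least locally integrable on $\Omega$.
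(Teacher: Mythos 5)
Your proposal is correct and follows essentially the same route as the paper: both test the operator on the coordinate functions $y_j$, observe that the constant gradient makes the $H^{1,p}_r(\widetilde\Omega)$-seminorm equal to a power of $|\widetilde\Omega|$ via the normalization $\int\Phi=1$ in the maximal-function description of $H^1_r$, and read off the regularity of each component $\varphi_j$ from boundedness. Your added remark on why constants live in $H^1_r(\widetilde\Omega)$ but not in $H^1(\mathbb R^n)$ or $H^1_z(\widetilde\Omega)$ (and your correct $1/p$-th power on $|\widetilde\Omega|$) is a welcome clarification, but the argument is the same.
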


\begin{proof}
We prove the theorem only for the first case. The second one is proved in a similar way.

  Due to the boundedness of $\varphi^\ast$
  $$
    \|f\circ\varphi \mid L^{1,p}(\Omega)\| \leq \|\varphi^\ast\| \|f \mid H^{1,p}_z(\widetilde\Omega)\|.
  $$
  Substitute the coordinate functions $f_j = y_j$, $j = 1, ..., n$, we obtain
  \begin{multline*}
    \|f_j \mid H^{1,p}_r(\widetilde\Omega)\| = \int_{\widetilde\Omega} \sup\limits_{0 < t \leq dist(x, \partial \widetilde\Omega)} \left| \frac{1}{t^n} \int_{B(x,t)} \Phi(\frac{x-y}{t}) \right| \, dx \\
	= \int_{\widetilde\Omega} \sup\limits_{0 < t \leq dist(x, \partial \widetilde\Omega)} |1| \, dx = |\widetilde\Omega|.
  \end{multline*}
  Hence,
  $$
    \|f_j\circ\varphi \mid L^{1,p}(\Omega)\| = \|\varphi_j \mid L^{1,p} (\Omega)\| \leq |\widetilde\Omega| \| \varphi^\ast\|.
  $$
\end{proof}

\vskip 0.3cm

{\bf Acknowledgments}:
The first author was supported by the Ministry of Science and higher education of the Russian Federation, state assignment of Sobolev Institute of Mathematics, Project No. 0250-2019-0001.

\vskip 0.3cm

Alexander Menovschikov; Sobolev Institute of Mathematics, 4 Acad. Koptyug avenue, Novosibirsk, 630090, Russia 
 
\emph{E-mail address:} \email{menovschikov@math.nsc.ru} \\

Alexander Ukhlov; Department of Mathematics, Ben-Gurion University of the Negev, P.O.Box 653, Beer Sheva, 8410501, Israel 
							
\emph{E-mail address:} \email{ukhlov@math.bgu.ac.il}


\begin{thebibliography}{References}

\bibitem{ART05} P.~Auschera, E.~Russb, P.~Tchamitchian, Hardy Sobolev spaces on strongly Lipschitz domains of $\mathbb R^n$,
Journal Funct. Anal., 218 (2005) 54--109.

\bibitem{C80} A.~P.~Calder\'on, On an inverse boundary value problem, 
in Seminar on Numerical Analysis and its Applications to Continuum Physics (Rio de Janeiro, 1980), 65--73.

\bibitem{F69} H.~Federer, Geometric measure theory, Sp\-rin\-ger Verlag, Berlin, 1969.

\bibitem{FS72} C.~Fefferman, E.~M.~Stein, $H^p$ spaces of several variables, Acta Math., 129 (1972), 137--193.

\bibitem{GRSY12} V.~Gutlyanskii, V.~Ryazanov, U.~Srebro, E.~Yakubov, The Beltrami Equation. A Geometric Approach,
Sp\-rin\-ger Verlag, New York, NY, 2012.

\bibitem{C94} D.-C.~Chang, The dual of Hardy spaces on a bounded domain in $\mathbb{R}^n$, Forum Math., 6 (1994), 65--81.

\bibitem{CKS93} D.-C.~Chang, S.~G.~Krantz, E.~M.~Stein, $H^p$ theory on a smooth domain in $\mathbb R^N$ and elliptic boundary value problems, J. Funct. Anal., 114 (1993), 286--347.

\bibitem{CDS05} D.-C.~Chang, G.~Dafni, C.~Sadosky, A Div-Curl Lemma in BMO on a Domain, Harmonic Analysis, Signal Processing, and Complexity. Progress in Mathematics, vol 238. Birkh"'auser Boston, 2005, 55--65.

\bibitem{J80} P.~W.~Jones, Extension theorems for BMO, Indiana Univ. Math. J., 29 (1980),  41--66.

\bibitem{CM95} C.~Cowen and B.~MaCluer, Composition operators on spaces of analytic functions, CRCPress, New York, 1995.

\bibitem{CHM}  M.~Cs\"ornyei, S.~Hencl, J.~Mal\'y, Homeomorphisms in the Sobolev space $W^{1,n-1}$, J. Reine Angew. Math., 644 (2010), 221--235.

\bibitem{M90} A.~Miyachi, $H^p$ spaces over open subsets of $\mathbb{R}^n$, Studia Math., 95 (3) (1990) 205--228.

\bibitem{CJY16} X.~Chen, R.~Jiang, and D.~Yang, Hardy and Hardy-Sobolev Spaces on Strongly Lipschitz Domains and Some Applications, Anal. Geom. Metr. Spaces, 4 (1) (2016) 336--362.

\bibitem{GGu} V.~Gol'dshtein, L.~Gurov, Applications of change of variables operators for exact embedding theorems, 
Integral Equations Operator Theory, 19 (1994), 1--24.

\bibitem{GGR95} V.~Gol'dshtein, L.~Gurov, A.~Romanov, Homeomorphisms that induce monomorphisms of Sobolev spaces, 
Israel J. Math., 91 (1995), 31--60.

\bibitem{GPU18_3} V.~Gol'dshtein, V.~Pchelintsev, A.~Ukhlov, On the First Eigenvalue of the Degenerate p-Laplace Operator in Non-convex Domains. Integral Equations Operator Theory, 90 (2018), 90:43.

\bibitem{GU} V.~Gol'dshtein, A.~Ukhlov, Weighted Sobolev spaces and embedding theorems, Trans. Amer. Math. Soc., 361, (2009), 3829--3850.

\bibitem{GU10} V.~Gol'dshtein, A.~Ukhlov, About homeomorphisms that induce composition operators on Sobolev spaces, 
Complex Var. Elliptic Equ., 55 (2010), 833--845.

\bibitem{GU16} V.~Gol'dshtein, A.~Ukhlov, On the first Eigenvalues of Free Vibrating Membranes in Conformal Regular Domains. 
Arch. Rational Mech. Anal., 221 (2016), no. 2, 893--915.

\bibitem{GU17} V.~Gol'dshtein, A.~Ukhlov, The spectral estimates for the Neumann-Laplace operator in space domains. 
Adv. in Math., 315 (2017), 166--193.

\bibitem{H93}  P.~Hajlasz, Change of variables formula under minimal assumptions,
Colloq. Math., 64 (1993), 93--101.

\bibitem{HK13} S.~Hencl, P.~Koskela, Composition of quasiconformal mappings and functions in Triebel-Lizorkin spaces, 
Math. Nachr., 286 (2013), 669--678. 

\bibitem{HK14} S.~Hencl, P.~Koskela, Lectures on Mappings of Finite Distortion, Springer, Berlin/Heidelberg, 2014.

\bibitem{KKSS14} H.~Koch, P.~Koskela, E.~Saksman, T.~Soto, Bounded compositions on scaling invariant Besov spaces, 
J. Funct. Anal., 266 (2014), 2765--2788. 

\bibitem{KYZ11} P.~Koskela, D.~Yang, Y.~Zhou, Pointwise characterizations of Besov and Triebel-Lizorkin spaces and quasiconformal mappings. Adv. Math., 226 (2011), 3579--3621.

\bibitem{ADV09} S.~C.~Arora, G.~Datt, S.~Verma, Weighted composition operators on Orlicz--Sobolev spaces, J. of the Australian Math. Soc., 83 (2009), no. 3, 327--334.

\bibitem{HK12} S.~Hencl, L.~Kleprlik, Composition of $q$-quasiconformal mappings and function in Orlicz--Sobolev spaces, Illinois J. Math., 56 (2012), no. 3, 931--955.

\bibitem{HKM14} S.~Hencl, L.~Kleprlik, J.~Maly, Composition operator and Sobolev--Lorentz spaces $WL^{n,q}$, Studia Mathematica, 221 (2014), 197--208.

\bibitem{K14} L.~Kleprlik, Composition operators on $W^1X$ are necessarily induced by quasiconformal mappings, Cent. Eur. J. Math., 12 (2014), no. 8, 1229--1238.

\bibitem{R17} A.~S.~Romanov, The composition operators in Sobolev spaces with variable exponent of summability, Sib. Elektron. Mat. Izv., 14 (2017) 794--806.

\bibitem{KS08} P.~Koskela, E.~Saksman, Pointwise characterizations of Hardy-Sobolev functions, Math. Res. Lett., 15 (2008), no. 4, 727--744.

\bibitem{KXZZ17} P.~Koskela, J.~Xiao, Yi Ru-Ya Zhang, Y.~Zhou, A quasiconformal composition problem for the $Q$-spaces, 
J. Eur. Math. Soc. (JEMS), 19 (2017), 1159--1187. 

\bibitem{L71} L.~G.~Lewis, Quasiconformal mappings and Royden algebras in space, Trans. Amer. Math. Soc., 158 (1971), 481--492.

\bibitem{MRSY01} O.~Martio, V.~Ryazanov, U.~Srebro, E.~Yakubov, To the the\-ory of $Q$-ho\-me\-o\-morphisms, Dokl. Akad. Nauk Rossii, 381 (2001), 20--22.

\bibitem{MRSY04} O.~Martio, V.~Ryazanov, U.~Srebro and E.~Yakubov,  Mappings with finite length distortion,  J. d'Anal. Math.,
93 (2004), 215--236.

\bibitem{MRSY05} O.~Martio, V.~Ryazanov, U.~Srebro and E.~Yakubov, On $Q$-ho\-me\-o\-mor\-phisms, Ann. Acad. Sci. Fenn. Math.,
30 (2005), no.~1, 49--69.

\bibitem{MRSY09} O.~Martio, V.~Ryazanov, U.~Srebro, E.~Yakubov, Moduli in modern mapping theory. Springer Monographs in Mathematics. Springer, New York, 2009.

\bibitem{M69} V.~G.~Maz'ya, Weak solutions of the Dirichlet and Neumann problems,
Trudy Moskov. Mat. Ob-va., 20 (1969), 137--172 (1969)

\bibitem{M} V.~Maz'ya, Sobolev spaces: with applications to elliptic
partial differential equations, Springer, Berlin/Heidelberg, 2010.

\bibitem{MS86} V.~Maz'ya, T.~O.~Shaposhbikovs, Multipliers in Spaces of Differentiable Functions, 
Leningrad Univ. Press., 1986.

\bibitem{MA16} A.~V.~Menovshchikov, Composition operators in Orlicz-Sobolev spaces, Siberian Math. J., 57 (2016), 849--859.

\bibitem{MA17} A.~V.~Menovshchikov, Regularity of the inverse of a homeomorphism of a Sobolev-Orlicz space, Siberian Math. J., 58 (2017), 
649--662.

\bibitem{N60} M.~Nakai, Algebraic criterion on quasiconformal equivalence of Riemann surfaces, Nagoya Math. J., 16 (1960), 157--184.

\bibitem{OP17}  M.~Oliva, M.~Prats, Sharp bounds for composition with quasiconformal mappings in Sobolev spaces,
J. Math. Anal. Appl., 451 (2017), 1026--1044. 

\bibitem{PW} L.~E.~Payne, H.~F.~Weinberger, An optimal Poincar\'e inequality for convex domains, 
Arch. Rat. Mech. Anal., 5 (1960), 286--292.

\bibitem{RSY01} V.~Ryazanov, U.~Srebro, E.~Yakubov, $BMO$-quasiconformal mappings, Journal d'Analyse Math\'ematique, 83 (2001), 1--20.

\bibitem{St93} E.~M.~Stein, Harmonic analysis: real-variable methods, orthogonality, and oscillatory
integrals, Princeton Mathematical Series 43, Monographs in Harmonic AnalysisIII, Princeton University Press, Princeton, NJ, 1993.

\bibitem{HCH15} L.~He, G.~F.~Cao, Z.~H.~He, Composition operators on Hardy-Sobolev spaces, Indian J. Pure Appl. Math., 46 (2015) no. 3, 255--267.

\bibitem{SW60} E.~M.~Stein, G.~Weiss, On the theory of harmonic functions of several variables. The theory of $H^p$-spaces,
Acta Math., 103 (1960), 25--62.

\bibitem{U93} A.~Ukhlov, On mappings, which induce embeddings of Sobolev spaces, Siberian Math. J., 34 (1993), 185--192.

\bibitem{UV08} A.~Ukhlov, S.~K.~Vodop'yanov, Mappings associated with weighted Sobolev spaces. Complex analysis and dynamical systems III, Contemp. Math., Amer. Math. Soc., Providence, RI. 455 (2008), 369--382.

\bibitem{V88} S.~K.~Vodop'yanov, Taylor Formula and Function Spaces, Novosibirsk Univ. Press., Novosibirsk, 1988.

\bibitem{V20} S.~K.~Vodop'yanov, Composition Operators on Weighted Sobolev Spaces and the Theory of $Q_p$-Homeomorphisms, Doklady Mathematics, 102 (2020), 371--375.

\bibitem{VG75} S.~K.~Vodop'yanov, V.~M.~Gol'dshtein, Structure isomorphisms of spaces $W^1_n$ and quasiconformal mappings, Siberian Math. J., 16 (1975), 224--246.

\bibitem{VGR} S.~K.~Vodop'yanov, V.~M.~Gol'dshtein, Yu.~G.~Reshetnyak, On geometric properties of functions with generalized first derivatives, Uspekhi Mat. Nauk, {34} (1979), 17--65. 

\bibitem{VU02} S.~K.~Vodop'yanov, A.~D.~Ukhlov, Superposition operators in Sobolev spaces, Russian Mathematics (Izvestiya VUZ), 46 (2002), no. 4, 11--33. 

\bibitem{VU04} S.~K.~Vodop'yanov, A.~D.~Ukhlov, Set Functions and Their Applications in the Theory of Lebesgue and Sobolev Spaces. I, Siberian Adv. Math., 14:4 (2004), 78--125.

\end{thebibliography}
\end{document}